\newtheorem{thm}{Theorem}[section]
\newtheorem{cor}[thm]{Corollary}
\theoremstyle{remark}
\newtheorem{remark}[thm]{Remark}
\renewcommand{\leq}{\leqslant}
\renewcommand{\geq}{\geqslant}
\renewcommand{\le}{\leqslant}
\renewcommand{\ge}{\geqslant}
\newcommand{\sph}{\mathbb{S}}
\newcommand{\rr}{\mathbb{R}}
\newcommand{\nn}{\mathbb{N}}
\newcommand{\hhh}{\mathcal{H}}
\newcommand{\ptl}{\partial}
\newcommand{\sg}{\sigma}
\newcommand{\Om}{\Omega}
\newcommand{\ga}{\gamma}
\newcommand{\la}{\lambda}
\newcommand{\escpr}[1]{\left<#1\right>}
\DeclareMathOperator{\vol}{vol}
\DeclareMathOperator{\cp}{Cap}
\DeclareMathOperator{\intt}{int}
\DeclareMathOperator{\ric}{Ric}
\DeclareMathOperator{\divv}{div}
\numberwithin{equation}{section}
\begin{document}

 \title[]{Comparison results for capacity}

\author[A.~Hurtado]{Ana Hurtado*}
\address{Departamento de Geometr\'{\i}a y Topolog\'{\i}a \\ Facultad
de Ciencias \\ Universidad de Granada \\ E--18071 Granada \\ Espa\~na}
\curraddr{}
\email{ahurtado@ugr.es}

\author[V.~Palmer]{Vicente Palmer$^{\#}$}
\address{Departament de Matem\`{a}tiques - Institut of New Imaging Technologies, Universitat Jaume I \\ E--12071 Castell\'{o}n \\ Espa\~na}
\email{palmer@mat.uji.es}

\author[M.~Ritor\'{e}]{Manuel Ritor\'{e}*}
\address{Departamento de Geometr\'{\i}a y Topolog\'{\i}a \\ Facultad
de Ciencias \\ Universidad de Granada \\ E--18071 Granada \\ Espa\~na}\email{ritore@ugr.es}

\thanks{* Supported by MICINN-FEDER grant MTM2010-21206-C02-01, and Junta de Andaluc\'{\i}a grants FQM-325 and P09-FQM-5088}

\thanks{$^{\#}$ Supported by the Caixa Castell\'{o} Foundation, and DGI grant MTM2010-21206-C02-02}

\subjclass[2010]{31C12, 31C15, 53C21, 58J65, 35J25}

\keywords{capacity, equilibrium potential, hyperbolicity, mean curvature, Cartan-Hadamard manifolds}
 
 \date{\today}

\begin{abstract}
We obtain in this paper bounds for the capacity of a compact set $K$. If $K$ is contained in an $(n+1)$-dimensional Cartan-Hadamard manifold, has smooth boundary, and the principal curvatures of $\ptl K$ are larger than or equal to $H_0>0$, then $\cp(K)\ge (n-1)\,H_0\vol(\ptl K)$. When $K$ is contained in an $(n+1)$-dimensional manifold with non-negative Ricci curvature, has smooth boundary,  and the mean curvature of $\ptl K$ is smaller than or equal to $H_0$, we prove the inequality $\cp(K)\le (n-1)\,H_0\vol(\ptl K)$. In both cases we are able to characterize the equality case. Finally, if $K$ is a convex set in Euclidean space $\rr^{n+1}$ which admits a supporting sphere of radius $H_0^{-1}$ at any boundary point, then we prove $\cp(K)\ge (n-1)\,H_0\mathcal{H}^n(\ptl K)$ and that equality holds for the round sphere of radius $H_0^{-1}$.
\end{abstract}

\maketitle

\thispagestyle{empty}

\section{Introduction}
The capacity of a compact set $K$ in a Riemannian manifold $M$ is defined by
\[
\cp(K)=\inf\bigg\{\int_{M} |\nabla \phi|^2\,dV :
\phi\in\mathcal{L}(K)\bigg\},
\]
where $\mathcal{L}(K)$ is the set of functions in the Sobolev
space $H_0^1(M)$ with $0\le\phi\le 1$ and $\phi\big|_K\equiv 1$, and $dV$ is the Riemannian volume in $M$.

From a physical point of view, the capacity of a compact set $K$
represents the total electric charge flowing into $M\setminus K$
through the boundary $\ptl K$. 
The exact value of the capacity of a set is known in few cases, and so its estimation in geometrical terms is of great
interest, not only in electrostatic, but in many physical
descriptions of flows, fluids, or heat, where Laplace equations is
used, \cite{conj-ps}. On the other hand, estimates of the capacity are enough to get geometrical
consequences such as the parabolic or hyperbolic character of the
manifold, \cite{I1}, \cite{I2}, \cite{MP1}, \cite{MP2}. We refer to the
survey by A. Grigor'yan \cite{Gri} for a discussion of these and
related concepts.

The employment of geometrical and
comparison techniques allows to obtain geometric inequalities
involving capacity, \cite{Sze1}, \cite{Sze2}, \cite[p.~17]{ps}. A bound for the capacity of a compact set with analytic boundary $K\subset\rr^3$ was obtained by Sz\"ego \cite[\S~2 (15)]{Sze1} in terms of the integral of the mean curvature $H$. He proved
\begin{equation}
\label{eq:szego1}
\cp(K)\le\int_{\ptl K}H\, dA,
\end{equation}
with equality for the round ball.

Sz\"ego also showed that, for a compact set $K\subset\rr^3$ with analytic boundary, one has
\[
\cp(K)\ge\bigg(\frac{3}{4\pi}\bigg)^{1/3}\!\!\!\!\vol(K)^{1/3},
\]
with equality precisely for the round ball, \cite[\S~2]{Sze2}, solving a problem stated by Poincar\'e in 1903 with an incomplete variational proof \cite[p.~vi]{ps}. In fact, Sz\"ego's proof works  for compact sets $K\subset\rr^{n+1}$, $n\ge 2$,  with smooth boundary, and provides the inequality
\begin{equation}
\label{eq:szegorn}
\cp(K)\ge c_{n+1}^2\,\frac{n-1}{n+1}\,\vol(K)^{(n-1)/(n+1)},
\end{equation}
with equality for the round ball. Here $c_{n+1}$ is the isoperimetric constant that appears in the optimal isoperimetric inequality $\vol(\ptl K)\ge c_{n+1}\vol(K)^{n/(n+1)}$ in $\rr^{n+1}$. Inequality \eqref{eq:szegorn} holds for any compact set $K\subset\rr^{n+1}$ by \cite[Thm.~8.1]{Gri}.

Polya and Sz\"ego also conjectured that there is a positive constant $\kappa$ such that
\[
\frac{\cp(K)}{\vol(\ptl K)^{1/2}}\ge\kappa,
\]
for any convex set $K\subset\rr^3$ (the above quantity has no lower bound for sets with non-convex boundary). They also conjectured that $\kappa=(32)^{1/2}\pi^{-1/2}$ and that equality holds for planar disks, see \cite{conj-ps}, \cite{ps}, \cite{Sze3}.

In this paper we prove two more results on estimations of the capacity of a compact set. In Theorem~\ref{thm2-b} we obtain
\begin{quote}
{\it Let $M^{n+1}$ be a Cartan-Hadamard manifold,
and $K\subset M^{n+1}$ a compact set with smooth boundary so
that the principal curvatures of $\ptl K$ are larger than or equal
to some constant $H_0>0$. Then we have
$$ \cp(K)\ge(n-1)\,H_0\vol(\ptl K).$$}
\end{quote}
Moreover, we are able to characterize the equality: it
is attained if and only  if the convex body has umbilical
boundary and $M\setminus\intt(K)$ is isometric to a warped
product. This result is optimal, as will be shown by Remark~\ref{rem:example}. As a consequence of this theorem, we obtain a simple proof of the
well-known fact that a Cartan-Hadamard manifold of dimension at
least three is hyperbolic.

On the other hand, for manifolds with non-negative Ricci curvature, we prove in Theorem~\ref{thm1-b}

\begin{quote}
{\it Let $M^{n+1}$ be a complete non-compact
Riemannian manifold with non-negative Ricci curvature, and
$K\subset M^{n+1}$ a compact set with smooth boundary. Assume that
the mean curvature of $\ptl K$ is smaller than or equal to
$H_0>0$. Then
\begin{displaymath}
\cp(K)\le (n-1)\,H_0\vol(\ptl K).
\end{displaymath}}
\end{quote}
We are also able to characterize the equality case. It is worthy to point out that for
analytic compact sets in $\rr^3$, the above result can be also obtained by applying the estimation \eqref{eq:szego1} in terms of the mean curvature given by Sz\"{e}go \cite{Sze1}.

In the proof of both results we use a transposition of the equilibrium potential of the Euclidean ball of radius $H_0^{-1}$ to the exterior of $K$ by means of the distance function to $K$. In the Cartan-Hadamard case there are no problems of differentiability, since the distance to a convex set with $C^\infty$ boundary is a $C^\infty$ function. In the non-negative Ricci curvature case, although the distance to $\ptl K$ is only smooth out
of the cut locus of $\ptl K$, the technical difficulties introduced by
the cut locus can be handled by a method of Cheeger and Yau
\cite{Ch-Y}. Theorems~\ref{thm2-b} and \ref{thm1-b} were proven by Ichihara \cite{I2}, \cite{I1} for geodesic balls, see also \cite[\S~15]{Gri}.

Theorems~\ref{thm2-b} and \ref{thm1-b} are valid in the Euclidean space $\rr^{n+1}$. In both results equality holds when $K$ is a Euclidean ball. Assuming $K$ is convex, Minkowski's formula implies the inequality $\vol(\ptl K)\ge\,(n+1)\,H_0\vol(K)$ when the principal curvatures of $\ptl K$ are larger than or equal to $H_0>0$, and it implies the opposite inequality when $H\le H_0$. We conclude in Corollary~\ref{cor:1} that
\[
\cp(K)\ge (n^2-1)\,H_0^2\vol(K),
\]
when the principal curvatures of $\ptl K$ are larger than or equal to $H_0$, and we conclude the opposite inequality in Corollary~\ref{cor:2} when the mean curvature of $\ptl K$ is smaller than or equal to $H_0$. In both Corollaries, equality holds for the round sphere.

In Euclidean space we can define a weak notion of ``principal curvatures bounded from below". A compact set $K$ in Euclidean space is $\la$-convex, $\la>0$, if there is a supporting ball of radius $\la^{-1}$ at every boundary point of $K$. A set $K$ with smooth boundary is $\la$-convex if and only if the principal curvatures of the boundary are larger than or equal to $\la$ \cite[p.~502]{MR0353225}, and so $\la$-convexity is an extension of the inequalities $\kappa_i\ge\la$, where $\kappa_i$ are principal curvatures, in a weak sense. We are able to extend the estimate in Theorem~\ref{thm2-b} to $H_0$-convex sets in Euclidean space without assuming the smoothness of the boundary. We prove in Theorem~\ref{baja regularidad}
\begin{quotation}
{\it Let $K$ be an $H_0$-convex body in $\mathbb{R}^{n+1}$, $H_0>0$. Then
\[
\cp(K)\ge(n-1)\,H_0\,\mathcal{H}^n(\ptl K),
\]
where $\mathcal{H}^n$ is the $n$-dimensional Hausdorff measure. Equality holds if and only if $K$ is a round ball of radius $H_0^{-1}$.}
\end{quotation}

We have organized this paper into three sections apart from this introduction. Section $2$ is devoted to establish
the definitions and results related to the capacity of a compact
set, which we will need in the sequel. In Section~$3$, we formulate
and prove our main results, Theorems~\ref{thm2-b} and \ref{thm1-b}, for the capacity of compact sets in
manifolds with a suitable control of their curvatures. Finally, in
Section~$4$, we state some consequences of Theorems~\ref{thm2-b} and \ref{thm1-b} in
the Euclidean space and we prove Theorem~\ref{baja regularidad}.

\section{Preliminaries}
Given an open set $\Om$ in a Riemannian manifold $M$ and a compact
set $K \subset \Om$, we define the capacity of $K$ in $\Om$ as
\begin{equation}\label{capacity}
\cp(K,\Om)= \inf \bigg\{\int_\Om |\nabla \phi |^2\, dV:
\phi \in \mathcal{L}(K, \Om)\bigg\},
\end{equation}
where $\mathcal{L}(K, \Om)$ is the set of functions on $M$ with
compact support in $\bar \Om$ which are locally Lipschitz and
satisfy: $0\leq \phi \leq 1$ and $\phi_{|K}=1$, see \cite{Gri}. Here $dV$ is the~Riemannian volume of $M$.

When $\Om$ is a precompact set and both $\Om$ and $K$ have smooth boundary, the infimum in (\ref{capacity})
is attained by the unique solution of the Dirichlet
problem in $\Om \setminus K$
\begin{equation}\label{eqDir}
\begin{cases}
\Delta u = 0\,\,\,&\text{on\, $\Om \setminus K$},\\
\phantom{\Delta }u = 1\,\,\,&\text{on\, $\partial K$}, \\
\phantom{\Delta }u = 0\,\,\,&\text{on\, $\partial \Om$}.
\end{cases}
\end{equation}
The function $u$ is called the equilibrium potential of $(K,
\Om)$. Using Green's formulae, we obtain
\begin{equation*}
\cp(K,\Om)= \int_{\Om\setminus K} | \nabla u |^2\, dV =
\int_{\partial K} \escpr{\nabla u,\nu}\, dA = \int_{\partial K}
| \nabla u |\, dA,
\end{equation*}
where $\nu$ is the unit normal vector field along $\partial K$
pointing into $K$, and $dA$ is the~Riemannian area element of $\ptl K$.

The capacity can be defined in the whole manifold $M$ by considering any exhaustion sequence $\{\Om_n\}_{n \in
\nn}$ covering $M$ such that $K \subset \Om_n\subset\overline{\Om}_n\subset \Om_{n+1}$
for all $n\in\nn$. Then,
\begin{displaymath}
\cp(K)=\lim_{n\rightarrow \infty} \cp(K, \Om_n).
\end{displaymath}

Moreover, if $\{\Om_n\}_{n\in\nn}$ is an exhaustion by precompact
sets such that $K\subset \Om_n$ for all $n\in\nn$, the maximum
principle for elliptic operators implies that $u_{n+1}\ge u_n$ in
$\Om_n\setminus K$. Hence the limit
\[
u=\lim_{n\to\infty} u_n
\]
exists and is a harmonic function with $u=1$ on $\ptl K$. A second
application of the maximum principle implies that $u$ is
independent of the exhaustion by precompact sets considered. It
follows easily that
\[
\cp(K)=\inf\bigg\{\int_{M} |\nabla \phi|^2\,dV :
\phi\in\mathcal{L}(K)\bigg\},
\]
where $\mathcal{L}(K)$ is the set of functions in the Sobolev
space $H_0^1(M)$ with $0\le\phi\le 1$ and $\phi\big|_K\equiv 1$.
The function $u$ is called the \emph{equilibrium potential} of $K$
and satisfies
\begin{equation}\label{eq:defcapint}
\cp(K)=\int_{\ptl K} |\nabla u|\,dA.
\end{equation}

For compact sets with non-smooth boundary, the equilibrium
potential $u$ can be obtained as the unique limit of the
equilibrium potentials $u_n$ of a nested sequence $\{K_n\}$ of
approximating smoothly bounded compact sets. Then $u$ is a
harmonic $C^{\infty}$ function in $M\setminus K$.

If $K$ is a convex body in Euclidean space, the equilibrium potential $u$
extends~continuously $\mathcal{H}^n$-almost everywhere to $\ptl
K$ \cite{Da}, and the gradient of $u$ has a non-tangential
limit to $\ptl K$ except in a set of $\mathcal{H}^n$-measure zero
\cite{MR1395668}. Also from \cite{MR1395668} it follows that~formula
\eqref{eq:defcapint} holds in the non-smooth case.

 For comparison purposes we are going to compute explicitly
the capacity and the equilibrium potential of the
$(n+1)$-dimensional closed ball $\bar{B}_{1/H_0}\subset\rr^{n+1}$
of mean curvature $H_0>0$.

By symmetry, we easily see that the equilibrium potential of
$(\bar{B}_{1/H_0},B_{(1/H_0) +t})$ only depends on the distance function $r$
to $\ptl\bar{B}_{1/H_0}$ and is given, for $n\ge 2$, by
\begin{equation}
\label{sol}
\Phi_t(r)=\frac{1}{1-(1+tH_0)^{1-n}}((1+rH_0)^{1-n}-
(1+tH_0)^{1-n}).
\end{equation}
In fact,
\begin{equation}\label{laplacEuclideo}
\Delta \Phi_t= \Phi_t''(r)+ \frac{n H_0}{1+r H_0}\,
\Phi_t'(r)=0,
\end{equation}
and the boundary conditions $\Phi_t(0)=1$ and $\Phi_t(t)=0$ are
satisfied. So, $\Phi_t$ is a solution of the corresponding
Dirichlet problem in $B_{(1/H_0) +t}\setminus \bar{B}_{1/H_0}$ given by
\eqref{eqDir}. Moreover, the maximum principle for elliptic
operators guarantees that $\Phi_t$ is the only solution.

Taking limits in \eqref{sol} when $t\to\infty$ we obtain that
the equilibrium potential of $\bar{B}_{1/H_0}$ is given by
\begin{equation}
\label{eq:epball}
\Phi_{H_0}(r)=(1+r\,H_0)^{1-n}.
\end{equation}

As a consequence of \eqref{eq:defcapint} and equality $|\nabla\Phi_{H_0}|=-\Phi'_{H_0}(0)=(n-1)\,H_0$ in $\ptl\bar{B}_{1/H_0}$ we get
\begin{equation}
\label{capball}
\cp(\bar{B}_{1/H_0})=(n-1)\,H_0\vol(\ptl\bar{B}_{1/H_0}).
\end{equation}
Formula \eqref{capball} also holds for $n=1$, since in this case $\Phi_t(r)=1-\log(1+H_0 r)/\log(1+H_0 t)$, $\Phi(r)\equiv 1$, and $\cp(\bar{B}_{1/H_0})=0$. 

\section{Main results}

We begin this section with a comparison result for the capacity of
a convex body in a Cartan-Hadamard manifold. In the sequel, by a
convex body we mean a compact convex body set with nonempty
interior. By a result of Alexander \cite[Thm.~1]{MR0448262}, a compact set with smooth boundary and positive principal curvatures in a Cartan-Hadamard manifold is a convex set.

\begin{thm}
\label{thm2-b} Let $M^{n+1}$ be a Cartan-Hadamard manifold, and
$K\subset M^{n+1}$ a compact set with smooth boundary. Assume that
the principal curvatures of $\ptl K$ are larger than or equal to
some constant $H_0>0$. Then we have
\begin{equation}
\label{eq:cp1} \cp(K)\ge(n-1)\,H_0\vol(\ptl K).
\end{equation}
Equality holds in \eqref{eq:cp1} if and only if $\ptl K$ is
totally umbilical with mean curvature~$H_0$ and $M\setminus
\intt(K)$ is isometric to the product $\ptl K\times
[0,\infty)$ with the warped metric $(1+H_0r)^2h+dr^2$, where $h$ is the Riemannian metric in $\ptl K$ and $r\in [0,\infty)$.
\end{thm}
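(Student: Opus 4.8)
The plan is to transplant the explicit equilibrium potential $\Phi_{H_0}$ of the Euclidean model ball, computed in \eqref{eq:epball}, onto the exterior $M\setminus\intt(K)$ by composing it with the distance function to $\ptl K$. Concretely, let $r(x)=\operatorname{dist}(x,\ptl K)$ for $x\in M\setminus\intt(K)$, and define the test function $\varphi=\Phi_{H_0}\circ r$. Because $K$ is convex with smooth boundary in a Cartan-Hadamard manifold (using Alexander's theorem, so that the hypothesis $\kappa_i\ge H_0>0$ gives convexity), the distance function $r$ is globally $C^\infty$ on $M\setminus\intt(K)$, and there is no cut locus to worry about; this is exactly the simplification the introduction advertises for the Cartan-Hadamard case. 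The idea is then to show that $\varphi$ is a \emph{subsolution} of the Laplace equation, i.e.\ $\Delta\varphi\ge 0$, so that comparison with the genuine equilibrium potential $u$ of $K$ forces $\varphi\le u$ on the exterior, with matching boundary values $\varphi=u=1$ on $\ptl K$.

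First I would record that $\Delta r=\sum_i\kappa_i(r)$, where $\kappa_i(r)$ are the principal curvatures of the level set $\{r=\text{const}\}$, and that these satisfy a Riccati comparison. Since $M$ is Cartan-Hadamard (nonpositive sectional curvature) and the initial principal curvatures on $\ptl K$ satisfy $\kappa_i\ge H_0$, the standard Riccati/Jacobi comparison gives $\kappa_i(r)\ge H_0/(1+H_0 r)$ along each normal geodesic, hence
\begin{equation}
\label{eq:lapr}
\Delta r\ge\frac{n\,H_0}{1+H_0 r}.
\end{equation}
Next I would compute $\Delta\varphi=\Phi_{H_0}''(r)\,|\nabla r|^2+\Phi_{H_0}'(r)\,\Delta r=\Phi_{H_0}''(r)+\Phi_{H_0}'(r)\,\Delta r$, using $|\nabla r|=1$. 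Because $\Phi_{H_0}'(r)=-(n-1)H_0(1+H_0 r)^{-n}<0$, the inequality \eqref{eq:lapr} can be inserted with a reversed sense, and comparing against the exact model ODE \eqref{laplacEuclideo} (which reads $\Phi_{H_0}''+\tfrac{nH_0}{1+H_0r}\Phi_{H_0}'=0$) yields $\Delta\varphi\ge 0$. Thus $\varphi$ is subharmonic on $M\setminus\intt(K)$. Then the maximum principle, applied on each exhausting domain $\Omega_n\setminus K$ and passed to the limit as in the Preliminaries, gives $\varphi\le u$, whence $|\nabla u|\ge|\nabla\varphi|=-\Phi_{H_0}'(0)=(n-1)H_0$ on $\ptl K$. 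Integrating this over $\ptl K$ and using the formula $\cp(K)=\int_{\ptl K}|\nabla u|\,dA$ from \eqref{eq:defcapint} delivers \eqref{eq:cp1}.

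For the equality case I would trace back the inequalities. Equality in \eqref{eq:cp1} forces $|\nabla u|=(n-1)H_0$ $dA$-a.e.\ on $\ptl K$, which by the gradient comparison $\varphi\le u$ with equal boundary values forces $u=\varphi$ identically on $M\setminus\intt(K)$; consequently $\varphi$ must itself be harmonic, so \eqref{eq:lapr} must be an equality for all $r$. Equality in the Riccati comparison is the rigid case: it forces every principal curvature to equal $H_0/(1+H_0 r)$ (so $\ptl K$ is totally umbilical with mean curvature $H_0$) and forces the relevant sectional curvatures in the radial directions to vanish, which pins down the metric as the warped product $(1+H_0 r)^2 h+dr^2$. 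The main obstacle I anticipate is this equality analysis: getting the correct sense of the inequality in the Laplacian comparison (keeping careful track of the sign of $\Phi_{H_0}'$) is routine, but extracting the full rigidity — promoting pointwise equality in the Riccati equation to an isometry onto the stated warped product — requires invoking the sharp Jacobi-field/Riccati rigidity and verifying that the boundary is connected and the normal exponential map is a global diffeomorphism onto the exterior. The convexity from Alexander's theorem and the absence of a cut locus should make the latter geometric verification clean.
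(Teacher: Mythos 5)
Your overall strategy is exactly the paper's: transplant the radial Euclidean potential $\Phi_{H_0}$ of \eqref{eq:epball} to $M\setminus\intt(K)$ via the distance function, compare the mean curvature of the level sets with the model by a Riccati/Jacobi argument, apply the maximum principle and Hopf's lemma on $\ptl K$, and extract the rigidity from equality in the Riccati comparison. However, the sign bookkeeping in your Laplacian comparison is wrong, and as written your chain of inequalities would prove the \emph{reverse} of \eqref{eq:cp1}. From $\Delta\varphi=\Phi_{H_0}''(r)+\Phi_{H_0}'(r)\,\Delta r$ with $\Phi_{H_0}'<0$ and $\Delta r\ge nH_0/(1+H_0r)$, multiplying the curvature bound by the \emph{negative} factor $\Phi_{H_0}'(r)$ gives $\Phi_{H_0}'(r)\,\Delta r\le \Phi_{H_0}'(r)\,nH_0/(1+H_0r)$, hence by \eqref{laplacEuclideo}
\[
\Delta\varphi\le \Phi_{H_0}''(r)+\Phi_{H_0}'(r)\,\frac{nH_0}{1+H_0r}=0,
\]
so $\varphi$ is \emph{super}harmonic, not subharmonic as you claim. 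Consequently the maximum principle yields $\varphi\ge u$ (not $\varphi\le u$), and it is this inequality, together with $\varphi=u=1$ on $\ptl K$ and Hopf's lemma, that gives $|\nabla\varphi|\le|\nabla u|$ on $\ptl K$ and hence $\cp(K)=\int_{\ptl K}|\nabla u|\,dA\ge -\Phi_{H_0}'(0)\vol(\ptl K)=(n-1)\,H_0\vol(\ptl K)$.

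Taken literally, your version ($\Delta\varphi\ge0$, so $\varphi\le u$) would force $\varphi$ to decay faster than $u$ away from $\ptl K$, so Hopf's lemma would give $|\nabla\varphi|\ge|\nabla u|$ on $\ptl K$ and the upper bound $\cp(K)\le(n-1)\,H_0\vol(\ptl K)$; this is precisely the mechanism of Theorem~\ref{thm1-b}, where the curvature hypotheses point the other way. Your step ``$\varphi\le u$, whence $|\nabla u|\ge|\nabla\varphi|$'' therefore does not follow --- the two assertions are incompatible. Once the signs are corrected the argument coincides with the paper's proof, and your outline of the equality case (rigidity in the Riccati comparison forcing umbilicity with mean curvature $H_0$, vanishing radial sectional curvatures, parallel normalized Jacobi fields, and hence the warped product $(1+H_0r)^2h+dr^2$, the normal exponential map being a global diffeomorphism because the convex set $K$ has empty cut locus) is the right one.
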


\begin{proof}
Consider the equilibrium potential $\Phi=\Phi_{H_0}$ of the
Euclidean ball $\bar{B}_{1/H_0}$. Let $r:M\setminus\intt(K)\to\rr$ denote the distance to
$K$ \cite{gbs}. The function $r$ is $C^\infty$ in $M\setminus K$ since the cut locus of a convex set in a Cartan-Hadamard manifold is empty. We define $v:M\setminus\intt(K)\to\rr$ by
\[
v(p)=\Phi_{H_0}(r(p)).
\]
It is clear that $0\le v\le 1$, that $v\equiv 1$ in $\ptl K$, and that function $v$ is $C^\infty$ in $M\setminus K$. We have
\begin{equation}\label{eq:laplav}
\Delta v(p)=\Phi''(r(p))+\Phi'(r(p))\,nH_r(p),
\end{equation}
where $H_r$ is the mean curvature of $\ptl K_r$.

We estimate the mean curvature $H_r$ as in Rauch's Comparison Theorem \cite[II.6.4]{chavel-rg}. Let $\ga:[0,\infty)\to M$ be a geodesic leaving $\ptl K$ orthogonally and $E$ a Jacobi field along $\ga$ orthogonal to $\ga'(r)$ for all $r\ge 0$. Define
\begin{equation}
\label{eq:deff}
f(r)=\frac{\escpr{E',E}(r)}{|E(r)|^2},
\end{equation}
where $E'$ is the covariant derivative of $E$ along $\ga$. Computing the derivative of $f(r)$ using the Jacobi equation we have
\begin{equation}
\label{eq:estimatef}
f'(r)=-\frac{R(E,\ga',\ga',E)}{|E|^2}(r) +\frac{|E'|^2|E|^2-\escpr{E',E}^2}{|E|^4}(r)-f(r)^2\ge -f(r)^2,
\end{equation}
where in the inequality we have used that the sectional curvature $R(E,\ga',\ga',E)$ of the plane generated by $E(r)$ and $\ga'(r)$ is non-negative, and that $\escpr{E',E}^2\le |E|^2|E'|^2$ by Cauchy-Schwarz inequality. Integrating the above differential inequality, we obtain
\begin{equation}
\label{eq:estimate}
f(r)\ge\frac{f(0)}{1+f(0)r}.
\end{equation}
Equality holds in \eqref{eq:estimate} if and only if the sectional
curvature along the plane generated by $\ga'(r)$ and $E(r)$ is $0$
for all $r\ge 0$ and $E/|E|$ is a parallel vector field along
$\ga$ (observe that $(|E|^{-1}\,E)'$ has modulus $|E|^{-2}\big(|E|^2|E'|^2-\escpr{E,E'}^2\big)^{1/2}$.

Fix now some $r>0$. To estimate the mean curvature $H_r(p)$ we choose Jacobi fields $E_1,\ldots,E_n$ along $\ga$ which are
independent eigenvectors of the second fundamental form of $\ptl
K_r$ at $p$. Hence they are everywhere orthogonal to $\ga'$. For $i=1,\ldots,n$ we
consider the functions along $\ga$ defined by
\[
f_i=\frac{\escpr{E_i',E_i}}{|E_i|^2}.
\]
So we have
\begin{equation}
\label{eq:estimatehr}
nH_r(p)=\sum_{i=1}^n\frac{\escpr{E_i',E_i}}{|E_i|^2}(r(p))\ge
\sum_{i=1}^n \frac{f_i(0)}{1+f_i(0)r(p)}\ge \frac{nH_0}{1+H_0r(p)}.
\end{equation}
The first inequality in \eqref{eq:estimatehr} follows from
\eqref{eq:estimate} and the second one since $f_i(0)\ge H_0$ and
the function $x\mapsto x/(1+rx)$ is increasing. Equality holds in
\eqref{eq:estimatehr} for all $i=1,\ldots,n$ if and only if the principal curvatures of
$\ptl K$ at $\ga(0)$ are all equal to $H_0$, the sectional
curvatures of the planes generated by $\ga'(r)$ and $E_i(r)$ are
identically $0$, and $E_i/|E_i|$ is a parallel vector field. Assuming $|E_i(0)|=1$ for all $i=1,\ldots,n$, we get
\begin{equation}
\label{eq:parallel}
E_i(r)=(1+H_0r)\,P_i(r),
\end{equation}
where $P_i(r)$ is a parallel vector
field along $\ga$. This follows since $|E_i(r)|=1+H_0r$, because $(|E_i|^2)'=2\,\escpr{E_i',E_i}=2f\,|E_i|^2$ and $f(r)=H_0(1+H_0r)^{-1}$ as we are assuming that  equality holds in \eqref{eq:estimatehr}.  Assume that \eqref{eq:parallel} holds for every point in $\ptl
K$. Consider the map $f:\ptl K\times [0,\infty)\to M\setminus\intt(K)$
given by $f(p,r):=\exp_p(rN_p)$, where $N_p$ is the outer unit
normal to $\ptl K$. The pullback of the Riemannian metric of
$M\setminus\intt(K)$ can be written on $\ptl K\times [0,\infty)$ as the warped metric
\[
(1+H_0r)^2\,h+dr^2,
\]
where
$h$ is the Riemannian metric of $\ptl K$.

Let $u$ be the equilibrium potential of $K$. Since $\Phi' \leq 0$,
we conclude from \eqref{laplacEuclideo}, \eqref{eq:laplav} and
\eqref{eq:estimatehr} that
\begin{equation}
\label{eq:estimatelaplacian} \Delta v(p)\leq
\Phi''(r(p))+\Phi'(r(p))\,\frac{nH_0}{1+H_0r(p)}=0=\Delta u(p).
\end{equation}

Let us check that $v\ge u$ in $M\setminus K$. For every $t>0$, let
$u_t$ be the equilibrium potential of $(K,K_t)$, $\Phi_t$ the
equilibrium potential in Euclidean space of $(\bar{B}_{1/H_0},
B_{(1/H_0)+t})$, and $v_t:=\Phi_t\circ r$. Equation
\eqref{eq:estimatehr} implies that $\Delta (v_t-u_t)\le 0$ in $K_t
\setminus K$. By the maximum principle, $v_t\ge u_t$ in
$K_t\setminus K$. Since $v_t$, $u_t$ are increasing families
converging to $v$ and $u$ respectively we conclude that $v\ge u$
in $M\setminus K$.

As $\Delta(v-u)\le 0$ in $M\setminus K$, $v\ge u$ in $M\setminus
K$, and $u\equiv v\equiv 1$ in $\ptl K$, the maximum principle \cite[\S~3.2]{gt}
implies
\[
|\nabla v|\le |\nabla u| \quad\text{on }\ptl K,
\]
and so
\begin{align}
\label{eq:capestimate} \cp(K)=\int_{\ptl K} |\nabla
u|\,dA&\ge\int_{\ptl K}|\nabla v|\,dA
\\
\notag &= -\Phi'(0)\vol(\ptl K)=(n-1)\,H_0\vol(\ptl K).
\end{align}
Assume equality holds in \eqref{eq:capestimate}. Then $|\nabla v|=|\nabla u|$ on $\ptl K$ and, by the strong~maximum principle \cite{gt},  $u\equiv v$ on $M\setminus K$. This implies that equality also holds in \eqref{eq:estimatelaplacian} and \eqref{eq:estimatehr}. By the discussion of equality after \eqref{eq:estimatehr} we have that the principal curvatures of $\ptl K$ are all equal to $H_0$ and $M\setminus \intt(K)$ is isometric to the product $\ptl K\times [0,\infty)$, with the warped metric $(1+H_0r)^2h+dr^2$, where $h$ is the Riemannian metric in $\ptl K$ and $r\in [0,\infty)$.
\end{proof}

\begin{remark}
\label{rem:example}
The characterization of equality in Theorem~\ref{thm2-b} is the
best possible. Consider a smooth function $g:[0,\infty)\to
[0,\infty)$ so that $g(0)=0$, $g'(0)=1$, $g^{(2k)}(0)=0$ for all $k\in\nn$, $g''(t)\ge 0$ for all
$t\ge 0$. Then $\sph^n\times [0,\infty)$ with the warped metric $g(t)^2h_0+dt^2$, where $h_0$ is the standard Riemannian metric in $\sph^n$, is a smooth complete
$(n+1)$-dimensional Hadamard manifold $M$ 
\cite[\S~3.2.3]{petersen}. Given $t_0>0$, $H_0>0$, the function
$g$ can be chosen so that we have $H_0=g'(t_0)/g(t_0)$ and
\[
g(t_0+r)=g(t_0)\,(1+H_0r),\quad r\ge 0.
\]
Consider the convex ball $K:=\{t\le t_0\}$. The principal curvatures of $\ptl K$ are all equal to $H_0$, and the mean curvature of $\ptl K_r=\{t=t_0+r\}$ is given by
\[
H_r=\frac{g'(t_0+r)}{g(t_0+r)}=\frac{H_0}{1+H_0r},
\]
so that the equilibrium potential of $K$ is given by $u(r)=\Phi_{H_0}(r)$, where $\Phi_{H_0}$ is defined by \eqref{eq:epball}.
Then we have
\[
\cp(K)=\int_{\ptl K} |\nabla u|\,dA=-\Phi_{H_0}'(0)\,\vol{\ptl K}=(n-1)\,H_0\vol(\ptl K).
\]
In this example, the Riemannian metric inside $K$ can be slightly perturbed (around a point with strictly negative sectional curvatures) to a non-warped metric with non-positive sectional curvatures.
\end{remark}


\begin{remark}
The inequality of Theorem \ref{thm2-b}, can be also written as
\begin{displaymath}
\frac{\cp(K)}{\vol(\partial K)}\ge
\frac{\cp(\bar{B}_{1/H_0})}{\vol(\ptl \bar{B}_{1/H_0})},
\end{displaymath}
using \eqref{capball} whenever $K\subset M$ is a compact set with smooth boundary and principal curvatures satisfying $\kappa_i\ge H_0$. So we have obtained a comparison result
between the capacity of a convex body in $M$ and the capacity of a
round ball in the Euclidean space via the previous comparison of
the Laplacian of the distance function in both manifolds.
\end{remark}

\begin{remark}
A Riemannian manifold $M$ is said to be hyperbolic if there exists
a non-constant positive superharmonic function on $M$. Otherwise
it is called parabolic. The so-called type problem for manifolds is related to the problem of establishing necessary and
sufficient geometric conditions for a Riemannian manifold to be
hyperbolic or parabolic. This classical problem began to be
studied for Riemannian surfaces in the thirties by Ahlfors,
Myrberg, Nevanlinna and Royden among others and have given rise to
a large literature.

Lyons and Sullivan \cite{LS} gave a list of
equivalent conditions to check the hyperbolicity of an oriented
Riemannian manifold, which is known as the {\it
Kelvin-Nevanlinna-Royden criterium}. This criterium states that $M$
is hyperbolic if and only if there exists a compact set $K$ in $M$
with positive capacity.

Notice that, as a consequence of the above theorem, the capacity of a geodesic
ball of a Cartan-Hadamard manifold of dimension greater than or
equal to three is strictly positive. So, applying the
Kelvin-Nevanlinna-Royden criterium we have an alternative proof of
the hyperbolicity of such manifolds. This was previously known by the works of Ichihara \cite{I2}, \cite{I1}, see also \cite[Thm.~15.3]{Gri}. Alternatively, one can also use the isoperimetric inequality of Hoffman and Spruck \cite{MR0365424} together with Theorem~8.2 in \cite{Gri} to prove the hyperbolicity of Cartan-Hadamard manifolds of dimension larger than or equal to three.
\end{remark}

Now we state a comparison result for complete non-compact manifolds with non-negative Ricci curvature.

\begin{thm}
\label{thm1-b} Let $M^{n+1}$ be a complete non-compact Riemannian
manifold with non-negative Ricci curvature, and $K\subset M^{n+1}$
a compact set with smooth boundary. Assume that the mean curvature
of $\ptl K$ is smaller than or equal to $H_0>0$. Then
\begin{equation}
\label{eq:cp2} \cp(K)\le (n-1)\,H_0\vol(\ptl K).
\end{equation}
Moreover, equality holds in \eqref{eq:cp2} if and only if $M\setminus\intt(K)$ is isometric to the product $\ptl K\times [0,\infty)$ with the warped metric $(1+H_0r)^2h+dr^2$, where $h$ is the Riemannian metric of $\ptl K$, and $r\in [0,\infty)$.
\end{thm}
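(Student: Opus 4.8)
The plan is to mirror the proof of Theorem~\ref{thm2-b}, reversing every inequality. I again transpose the equilibrium potential $\Phi=\Phi_{H_0}$ of the Euclidean ball $\bar B_{1/H_0}$ given by \eqref{eq:epball} to the exterior of $K$ through the distance function. Let $r:M\setminus\intt(K)\to\rr$ be the distance to $K$ and set $v:=\Phi\circ r$. As before $0\le v\le 1$ and $v\equiv 1$ on $\ptl K$, and wherever $r$ is smooth we have $\Delta v=\Phi''(r)+\Phi'(r)\,\Delta r$, with $\Delta r$ equal to the mean curvature (trace of the second fundamental form) of the level set $\ptl K_r$ through the point. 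The essential difference is that I must now bound this mean curvature from \emph{above}, so that, using $\Phi'\le 0$, the function $v$ becomes subharmonic rather than superharmonic.

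First I would establish the upper bound $\Delta r\le nH_0/(1+H_0r)$. Let $\ga$ be a geodesic leaving $\ptl K$ orthogonally and let $\theta(r)=\Delta r$ along $\ga$ denote the trace of the shape operator $S$ of $\ptl K_r$. The Riccati equation $S'+S^2+R_{\ga'}=0$, taken trace and combined with the hypothesis $\ric\ge 0$ and the Cauchy--Schwarz inequality $\operatorname{tr}(S^2)\ge\theta^2/n$, gives the differential inequality $\theta'=-\operatorname{tr}(S^2)-\ric(\ga',\ga')\le -\theta^2/n$. Comparing with the solution $\bar\theta(r)=nH_0/(1+H_0r)$ of $\bar\theta'=-\bar\theta^2/n$ with $\bar\theta(0)=nH_0$, and using that the initial mean curvature satisfies $\theta(0)\le nH_0$, I obtain $\Delta r\le nH_0/(1+H_0r)$. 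Since $\Phi'\le 0$ and $\Phi$ solves $\Phi''+\Phi'\,nH_0/(1+H_0r)=0$ by \eqref{laplacEuclideo}, this yields $\Delta v\ge \Phi''+\Phi'\,nH_0/(1+H_0r)=0$, so $v$ is subharmonic wherever $r$ is smooth.

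The main obstacle is that $r$ is now only smooth outside the cut locus of $\ptl K$, so $v$ is merely Lipschitz and the computation above needs justification across the cut locus. This is handled by the method of Cheeger and Yau \cite{Ch-Y}: the bound $\Delta r\le nH_0/(1+H_0r)$ holds in the sense of distributions, the singular part of $\Delta r$ along the cut locus being a non-positive measure. As $\Phi'\le 0$, the product $\Phi'(r)\,\Delta r$ acquires a non-negative singular part, so $\Delta v\ge 0$ holds weakly on all of $M\setminus K$. With this in hand I run the exhaustion argument as in Theorem~\ref{thm2-b} but with the reversed sign: for the potentials $u_t$ of $(K,K_t)$ and $v_t=\Phi_t\circ r$ one has $\Delta(v_t-u_t)\ge 0$ with $v_t-u_t=0$ on $\ptl K$ and on $\ptl K_t$, so the maximum principle gives $v_t\le u_t$, and letting $t\to\infty$ yields $v\le u$ on $M\setminus K$. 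Because the cut locus lies at positive distance from $\ptl K$, the function $v$ is smooth near $\ptl K$; from $v\le u$, $v=u=1$ on $\ptl K$, and the resulting comparison of inward normal derivatives (as in Theorem~\ref{thm2-b}, \cite{gt}) I get $|\nabla v|\ge|\nabla u|$ on $\ptl K$, whence
\[
\cp(K)=\int_{\ptl K}|\nabla u|\,dA\le\int_{\ptl K}|\nabla v|\,dA=-\Phi'(0)\,\vol(\ptl K)=(n-1)\,H_0\vol(\ptl K).
\]

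For the equality case I would argue as in Theorem~\ref{thm2-b}: equality forces $|\nabla v|=|\nabla u|$ on $\ptl K$, and the strong maximum principle gives $u\equiv v$, hence $\Delta v\equiv 0$. This propagates equality back through every estimate. The singular part of $\Delta r$ must vanish, so the cut locus does not obstruct and the normal exponential map is a diffeomorphism; moreover $\ric(\ga',\ga')\equiv 0$, and the Cauchy--Schwarz equality $\operatorname{tr}(S^2)=\theta^2/n$ forces every level set $\ptl K_r$ to be totally umbilical with mean curvature $H_0/(1+H_0r)$, the initial value being exactly $H_0$. Exactly as in the derivation of \eqref{eq:parallel}, the orthogonal Jacobi fields then satisfy $E_i(r)=(1+H_0r)P_i(r)$ with $P_i$ parallel, so the normal exponential map identifies $M\setminus\intt(K)$ with $\ptl K\times[0,\infty)$ endowed with the warped metric $(1+H_0r)^2h+dr^2$, where $h$ is the Riemannian metric of $\ptl K$.
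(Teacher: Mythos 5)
Your proposal is correct and follows essentially the same route as the paper: the same transplanted radial potential $v=\Phi_{H_0}\circ r$, the same traced Riccati inequality giving $\Delta r\le nH_0/(1+H_0r)$, the Cheeger--Yau treatment of the cut locus to obtain $\Delta v\ge 0$ in the distributional sense, the exhaustion/maximum-principle comparison yielding $v\le u$ and hence $|\nabla v|\ge|\nabla u|$ on $\ptl K$, and the same rigidity analysis via umbilicity and parallel Jacobi fields. (The paper's printed conclusion ``$v\ge u$'' after the exhaustion step is a typo; your $v\le u$ is the direction actually used in the boundary gradient comparison.)
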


\begin{proof}
As in the proof of Theorem~\ref{thm2-b}, we consider the function
$v(p)=\Phi_{H_0}(r(p))$, which is Lipschitz in
$M\setminus K$ and smooth on $M\setminus (K\cup C_{\ptl K})$,
where $C_{\ptl K}$ is the cut locus of $\ptl K$ in $M\setminus K$.
Recall that $C_{\ptl K}$ is a closed set of Riemannian measure
zero on $M\setminus K$. Moreover, if $p\in M\setminus (K\cup
C_{\ptl K})$, then there is a unique minimizing geodesic
connecting $p$ and $\ptl K$, which is entirely contained in
$M\setminus (K\cup C_{\ptl K})$. We have
\begin{equation}
\label{eq:deltav2}
\Delta v(p)=\Phi''(r(p))+\Phi'(r(p))\,nH_r(p).
\end{equation}

Consider a geodesic $\ga:[0,c(p))\to M$ minimizing the distance to
$\ptl K$ with $\ga(0)=p$, where $c(p)$ is the cut distance. The
derivative of the mean curvature $H_r$ of the parallel
hypersurface $\ptl K_r$ along $\ga$ is given by
\begin{equation}
\label{eq:hr'}
nH_r'=-\text{Ric}(\ga',\ga')-|\sg_r|^2\le -|\sg_r|^2\le -n H_r^2,
\end{equation}
where $|\sg_r|^2$ is the squared norm of the second fundamental form of $\ptl K_r$, i.e., the sum of the squared principal curvatures. Equality holds in \eqref{eq:hr'} if and only if $\ric(\ga',\ga')=0$ and
$|\sg_r|^2=nH_r^2$ (when $\ptl K_r$ is totally umbilical at
$\ga(r)$). Hence, from the differential inequality \eqref{eq:hr'}, we conclude
\begin{equation}
\label{eq:hest2}
H_r(\ga(r))\le \frac{H_0(p)}{1+H_0(p)\,r}\le \frac{H_0}{1+H_0r}.
\end{equation}
In case of equality in \eqref{eq:hest2}, we have $\ric(\ga',\ga')\equiv 0$ along $\ga$, $|\sg_r|^2=nH_r^2$, and $H_r=H_0(1+H_0r)^{-1}$. In particular, $\ptl K_r$ is totally umbilical with principal curvatures $H_0(1+H_0r)^{-1}$. Hence, if $E$ is a Jacobi field along the geodesic $\ga$ orthogonal to $\ga$ we have
\begin{equation}
\label{eq:e'}
E'(r)=\frac{H_0}{1+H_0r}\,E(r).
\end{equation}
It is easy to get from this equation that $E''=0$. By the Jacobi equation the sectional curvature $R(\ga',E,E,\ga')$ of the plane generated by $E$ and $\ga'$ is equal to $0$. Formula \eqref{eq:e'} implies that $E$ and $E'$ are linearly dependent. So we get $\escpr{E,E'}^2=|E|^2|E'|^2$ and we conclude, as in the proof of Theorem~\ref{thm2-b}, that $E/|E|$ is a parallel vector field along $\ga$ and that $|E(r)|=1+H_0r$. So it follows, as in the Cartan-Hadamard case, that $M\setminus\intt(K)$ is isometric to the product $\ptl K\times [0,\infty)$ with the warped metric $(1+H_0r)^2h+dr^2$, where $h$ is the Riemannian metric of $\ptl K$.

From \eqref{eq:deltav2} and \eqref{eq:hest2} we get
\begin{equation}\label{laplac}
\Delta v(p)\ge 0,\quad \text{in}\quad M\setminus (K\cup C_{\ptl
K}).
\end{equation}

We can prove that the above inequality is also true in $M\setminus
K$ in the sense of distributions. Indeed, let us show, following
the method of Cheeger and Yau \cite{Ch-Y} (see also \cite{Gri}),
that for all non-negative $\phi \in C_0^\infty(M\setminus K)$ we
have that

\begin{equation}\label{weaksense}
\langle\Delta v, \phi\rangle:=-\int_{M\setminus K} \escpr{\nabla v,
\,\nabla \phi} \,dV \geq 0.
\end{equation}

Given $p\in \ptl K$, let $c(p)$ be the cut distance (possibly
$\infty$). Since the cut locus $C_{\ptl K}$ is a closed set, the
function $c(p)$ is lower semi-continuous, so it can be obtained as
the limit of an increasing sequence $\{c_k(p)\}$ of smooth
positive functions. Now, let us define
$$V_k:=\{ \exp_p(rN_p):\, p\in
\partial K,\, 0<r < c_k(p)\},$$
where $N_p$ is the unit outer normal to $\ptl K$.

In this way, we obtain an increasing sequence $\{V_k\}$ of open
sets with smooth boundary such that $\bigcup_k V_k= M\setminus (K\cup
C_{\ptl K})$. Then, applying Green's formulae
\begin{align*}
\int_{V_k} \escpr{\nabla v,\nabla \phi}\, dV&=-\int_{V_k} \phi\, \Delta v\,
dV -\int_{\partial V_k} \phi \,\escpr{\nabla v, \nu_k}\, dA\\
&=-\int_{V_k} \phi\,\Delta v\, dV -\int_{\partial V_k} \phi\,
\Phi'\,\escpr{\nabla r, \nu_k}\, dA,
\end{align*}
where $\nu_k$ is the unit inner normal vector field along $\ptl
V_k$. We have also used $\phi\equiv 0$ in $\ptl V_k\cap\ptl K$. As $V_k=\{\exp_p(c_k(p)\,N_p):p\in\ptl K\}$ is a radial graph over $\ptl K$, $\nabla r$ is never tangent to $\ptl V_k$, and hence $\escpr{\nabla r,\nu_k}\neq 0$. As $\nu_k$ is the \emph{inner} unit normal to $\ptl V_k$, we have $\escpr{\nabla r,\nu_k}<0$ $(\nu_k$ and $\nabla r$ form an obtuse angle). Taking
into account \eqref{laplac} and $\Phi'\leq 0$, we conclude
\[
\int_{V_k} \escpr{\nabla v,\nabla \phi}\, dV \leq 0.
\]
Since $C_{\ptl K}$ has measure zero, taking $k \rightarrow
\infty$, we obtain \eqref{weaksense}.

As in the proof of Theorem~\ref{thm2-b} for every $t>0$, let $u_t$
be the equilibrium potential of $(K,K_t)$, $\Phi_t$ the
equilibrium potential in Euclidean space of $(\bar{B}_{1/H_0},
B_{(1/H_0)+t})$, and $v_t:=\Phi_t\circ r$. By the method exposed
above $\Delta (v_t-u_t)\geq 0$ in $K_t\setminus K$ in the sense of
distributions and applying the weak maximum principle, \cite[\S~3.1]{gt}, we get $v_t\leq u_t$ in $K_t\setminus K$. Since $v_t$, $u_t$ are
increasing families converging to $v$ and $u$ respectively we
conclude that $v\geq u$ in $M\setminus K$.

 Hence we
can argue as in the proof of Theorem~\ref{thm2-b} to conclude that
\[
|\nabla v|\ge |\nabla u| \quad\text{on }\ptl K,
\]
and so
\[
\cp(K)=\int_{\ptl K}|\nabla u|\,dA\le\int_{\ptl K}|\nabla
v|\,dA=-\Phi'(0)\vol(\ptl K)=(n-1)\,H_0\vol(\ptl K).
\]
Equality holds when $u=v$ in $M\setminus (\ptl K\cup C_{\ptl K})$. So equality holds in \eqref{eq:hr'} and \eqref{eq:hest2}. By the previous discussion of equality, this implies that $M\setminus\intt(K)$ is isometric to the product $\ptl K\times [0,\infty)$ with the warped metric $(1+H_0r)^2h+dr^2$, where $h$ is the Riemannian metric of $\ptl K$.
\end{proof}

\begin{remark}
Note that the inequality obtained for the capacity of $K$, can be
also written as
\begin{displaymath}
\frac{\cp(K)}{\vol(\partial K)}\le
\frac{\cp(\bar{B}_{1/H_0})}{\vol(\ptl\bar{B}_{1/H_0})},
\end{displaymath}
using \eqref{capball}.
\end{remark}

\section{The Euclidean Case}
Hadamard's Theorem \cite{28.0643.01} implies that a compact set with smooth boundary and strictly positive principal curvatures is a convex set.  As a consequence of Theorems~\ref{thm2-b} and \ref{thm1-b}
we get
\begin{cor}
\label{cor:euclidean2} Let $K\subset\rr^{n+1}$ be a convex body
with smooth boundary. Assume that the principal curvatures of
$\ptl K$ are larger than or equal to some constant $H_0>0$. Then
we have
\begin{equation}
\label{eq:cp1-e} \cp(K)\ge(n-1)\,H_0\vol(\ptl K).
\end{equation}
Equality holds in \eqref{eq:cp1-e} if and only if $K$ is a round
ball of radius $H_0^{-1}$.
\end{cor}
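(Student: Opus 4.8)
The plan is to deduce the inequality directly from Theorem~\ref{thm2-b} and then to settle the equality case using the classical classification of totally umbilical hypersurfaces in Euclidean space. First I would observe that $\rr^{n+1}$ is a Cartan-Hadamard manifold: it is complete, simply connected, and has identically zero, hence non-positive, sectional curvature. Since $K\subset\rr^{n+1}$ is a compact set with smooth boundary whose principal curvatures are at least $H_0>0$, Theorem~\ref{thm2-b} applies verbatim and yields the bound $\cp(K)\ge(n-1)\,H_0\vol(\ptl K)$, which is exactly \eqref{eq:cp1-e}.

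For the equality case, I would invoke the characterization already established in Theorem~\ref{thm2-b}: equality in \eqref{eq:cp1-e} forces $\ptl K$ to be totally umbilical with all principal curvatures equal to $H_0$. The next step is to identify such a boundary. Because $K$ is a convex body with nonempty interior in $\rr^{n+1}$, its boundary $\ptl K$ is a compact connected smooth hypersurface. A connected totally umbilical hypersurface of $\rr^{n+1}$ is, by the classical structure theorem, an open piece of a round sphere or of an affine hyperplane; since its principal curvatures equal $H_0>0$ it cannot be contained in a hyperplane, so $\ptl K$ lies on a round sphere of radius $H_0^{-1}$. Compactness and connectedness then force $\ptl K$ to be the entire sphere of radius $H_0^{-1}$, whence $K$ is the round ball $\bar{B}_{1/H_0}$.

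Finally I would check the converse. If $K=\bar{B}_{1/H_0}$ is the round ball of radius $H_0^{-1}$, then \eqref{capball} gives precisely $\cp(K)=(n-1)\,H_0\vol(\ptl K)$, so equality holds in \eqref{eq:cp1-e}. The only point requiring any attention is the identification of the umbilical boundary with a full round sphere, but this follows at once from the classification of umbilical hypersurfaces together with the topology of the boundary of a convex body. I do not expect any serious obstacle here, since the substantive analytic work—the comparison of the Laplacian of the distance function and the maximum principle argument—has already been carried out in the proof of Theorem~\ref{thm2-b}; the corollary is essentially the specialization of that theorem to the flat model, where the warped-product conclusion collapses to the statement that $K$ is a Euclidean ball.
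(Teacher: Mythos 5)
Your proposal is correct and matches the paper's (implicit) argument: the corollary is stated there as an immediate consequence of Theorem~\ref{thm2-b} applied to the Cartan-Hadamard manifold $\rr^{n+1}$, with the equality case settled exactly as you do, by noting that a compact connected totally umbilical hypersurface with principal curvatures $H_0>0$ in Euclidean space must be a round sphere of radius $H_0^{-1}$, and that the round ball attains equality by \eqref{capball}.
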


\begin{cor}\label{thm2}
Let $K\subset\rr^{n+1}$ be a compact set with smooth boundary.
Assume that the mean curvature of $\ptl K$ is smaller than or
equal to some constant $H_0>0$. Then we have
\begin{equation}
\cp(K)\le(n-1)\, H_0\,\vol(\ptl K),
\end{equation}
with equality if and only if $K$ is a round ball of radius
$H_0^{-1}$.
\end{cor}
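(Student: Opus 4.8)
The inequality is immediate from Theorem~\ref{thm1-b}: Euclidean space $\rr^{n+1}$ is complete, non-compact and flat, so $\ric\equiv 0$ and in particular $\ric\ge 0$. Since $K$ is a compact set with smooth boundary satisfying $H\le H_0$, the hypotheses of Theorem~\ref{thm1-b} hold verbatim and \eqref{eq:cp2} gives $\cp(K)\le(n-1)\,H_0\,\vol(\ptl K)$. The easy direction of the equality statement is settled for free as well: for the round ball $\bar B_{1/H_0}$ the boundary has constant mean curvature $H_0$, so equality holds by the explicit computation \eqref{capball}.

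For the converse I would reuse the equality analysis already carried out inside the proof of Theorem~\ref{thm1-b}. Equality in \eqref{eq:cp2} forces equality in \eqref{eq:hr'} and \eqref{eq:hest2} along every geodesic leaving $\ptl K$ orthogonally; reading off that discussion (and recalling $\ric\equiv 0$ here) gives $|\sg_r|^2=nH_r^2$, so every parallel hypersurface $\ptl K_r$, and in particular $\ptl K$ at $r=0$, is totally umbilical with all principal curvatures equal to the constant $H_0$. Equivalently, $\rr^{n+1}\setminus\intt(K)$ is isometric to $\ptl K\times[0,\infty)$ endowed with the warped metric $(1+H_0r)^2h+dr^2$.

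It then remains to globalize. By the classical classification of totally umbilical hypersurfaces in Euclidean space, a compact connected totally umbilical hypersurface with constant principal curvature $H_0>0$ is a round sphere of radius $H_0^{-1}$, and $K$ is the closed ball it bounds. The point requiring care is the connectedness of $\ptl K$, which must rule out both a disjoint union of balls and a ``shell''. Here I would invoke the warped-product model: the metric $(1+H_0r)^2h+dr^2$ makes $r$ a proper, unbounded function, so each end of $\rr^{n+1}\setminus\intt(K)$ reaches spatial infinity and $\rr^{n+1}\setminus\intt(K)$ can have no bounded component. Hence $\rr^{n+1}\setminus\intt(K)$ is connected, and since $\ptl K\times[0,\infty)$ is connected exactly when $\ptl K$ is, $\ptl K$ is a single round sphere and $K=\bar B_{1/H_0}$.

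All the analytic work sits in Theorem~\ref{thm1-b}, so the only genuine content here is the equality discussion. The step I expect to be the most delicate is the connectedness argument: passing from the local umbilicity and warped-product information to the global conclusion that $K$ is exactly one round ball, rather than a more complicated compact set whose boundary components are individually spheres of radius $H_0^{-1}$.
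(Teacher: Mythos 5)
Your proof is correct and takes essentially the same route as the paper, which obtains this corollary simply by applying Theorem~\ref{thm1-b} to $\rr^{n+1}$ (flat, hence $\ric\ge 0$) and reading the equality case off the warped-product rigidity statement. Your connectedness discussion for $\ptl K$ only fills in a globalization detail that the paper leaves implicit.
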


When $K\subset\rr^{n+1}$ is convex, Minkowski formula can be used to relate the capacity of
$K$ to its volume. To show this, pick a point $p$ in the
interior of $K$, and consider the radial vector field $X$ with center
$p$. Let $N$ be the  unit inner normal to $\ptl K$. By the convexity of $K$ we have $\escpr{X,N}< 0$ in $\ptl K$. Let $X^\top$ be the tangent projection of the vector field $X$ to $\ptl K$. Minkowski formula
\begin{equation}
\label{eq:mink}
\int_{\ptl K} (1+H\escpr{X,N})\,dA=0
\end{equation}
is obtained by applying the divergence theorem to the vector field $X^\top$ in $\ptl K$. Under our assumptions we have either $H\le H_0$ or $H\ge H_0$, so that Minkowski formula \eqref{eq:mink} implies
\[
\vol(\ptl K)=-\int_{\ptl K} H\escpr{X,N}\,dA\ge H_0\,
\bigg(-\int_{\ptl K}\escpr{X,N}\,dA\bigg)
\]
in case $H\ge H_0$ or the opposite inequality in case $H\le H_0$. Since $\divv X=(n+1)$ in $\rr^{n+1}$, by the divergence theorem applied to $X$ in $K$ we have
\begin{equation}
\label{eq:minkh0}
\vol(\ptl K)\ge (n+1)\,H_0\vol(K)
\end{equation}
in case $H\ge H_0$ and the opposite inequality when $H\le H_0$.

So we conclude from Corollaries \ref{cor:euclidean2} and \ref{thm2}, and from inequality\eqref{eq:minkh0}

\begin{cor}
\label{cor:1}
Let $K\subset\rr^{n+1}$ be a convex body
with smooth boundary. Assume that the principal curvatures of
$\ptl K$ are larger than or equal to some constant $H_0>0$. Then
we have
\begin{equation}
\label{eq:cp1-e2} \cp(K)\ge(n^2-1)\,H_0^2\vol(K).
\end{equation}
Equality holds in \eqref{eq:cp1-e2} if and only if $K$ is a round
ball of radius $H_0^{-1}$.
\end{cor}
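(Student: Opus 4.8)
The plan is to derive \eqref{eq:cp1-e2} by concatenating two estimates that are already available to us: the capacity lower bound of Corollary~\ref{cor:euclidean2} and the Minkowski-type volume inequality \eqref{eq:minkh0}. First I would record the elementary but crucial observation that the hypothesis $\kappa_i\ge H_0$ on every principal curvature of $\ptl K$ forces the mean curvature to satisfy $H=\tfrac{1}{n}\sum_{i=1}^n\kappa_i\ge H_0$ on all of $\ptl K$. This places us in the branch $H\ge H_0$ of the discussion preceding the statement, so that \eqref{eq:minkh0} is available in the form $\vol(\ptl K)\ge (n+1)\,H_0\vol(K)$.

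With this in hand, I would invoke Corollary~\ref{cor:euclidean2}, whose hypotheses are exactly ours, to obtain $\cp(K)\ge(n-1)\,H_0\vol(\ptl K)$, and then chain the two bounds:
\[
\cp(K)\ge (n-1)\,H_0\vol(\ptl K)\ge (n-1)\,H_0\,(n+1)\,H_0\vol(K)=(n^2-1)\,H_0^2\vol(K),
\]
which is precisely \eqref{eq:cp1-e2}. The only place where any care is required is the verification that the curvature hypothesis selects the correct direction of \eqref{eq:minkh0}; this is what I expect to be the ``main obstacle'', modest as it is, since getting the branch of the Minkowski inequality wrong would reverse the final inequality. Everything else is a routine substitution.

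For the equality discussion I would argue that the capacity estimate, not the volume estimate, is the binding constraint. If equality holds in \eqref{eq:cp1-e2}, then both inequalities in the chain above must be equalities; in particular the first one, supplied by Corollary~\ref{cor:euclidean2}, is an equality, and its equality characterization forces $K$ to be a round ball of radius $H_0^{-1}$. Conversely, for such a ball Corollary~\ref{cor:euclidean2} is an equality and the Minkowski formula \eqref{eq:mink} reduces to an identity because $H\equiv H_0$, so \eqref{eq:cp1-e2} is saturated. Hence the equality analysis collapses entirely onto that of Corollary~\ref{cor:euclidean2}, and no independent treatment of the Minkowski equality case is needed.
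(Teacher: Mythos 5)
Your proposal is correct and follows exactly the paper's route: the paper obtains Corollary~\ref{cor:1} precisely by chaining Corollary~\ref{cor:euclidean2} with the Minkowski inequality \eqref{eq:minkh0} in the branch $H\ge H_0$ (which, as you note, follows from $\kappa_i\ge H_0$), and your equality discussion, reducing everything to the equality case of Corollary~\ref{cor:euclidean2} and checking the ball saturates both inequalities, is the intended one.
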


\begin{cor}
\label{cor:2}
Let $K\subset\rr^{n+1}$ be a convex body with smooth boundary.
Assume that the mean curvature of $\ptl K$ is smaller than or
equal to some constant $H_0>0$. Then we have
\begin{equation}
\cp(K)\le(n^2-1)\, H_0^2\,\vol(K),
\end{equation}
with equality if and only if $K$ is a round ball of radius
$H_0^{-1}$.
\end{cor}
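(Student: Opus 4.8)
The plan is to chain the two inequalities already established in this section, namely the capacity estimate of Corollary~\ref{thm2} and the Minkowski volume comparison \eqref{eq:minkh0}, mirroring the argument used for Corollary~\ref{cor:1} but with every inequality reversed.

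First I would apply Corollary~\ref{thm2}. Since $\ptl K$ has mean curvature $H\le H_0$, that corollary yields
\[
\cp(K)\le (n-1)\,H_0\vol(\ptl K).
\]
Next I would exploit the convexity of $K$ to relate $\vol(\ptl K)$ to $\vol(K)$. Running the Minkowski computation that precedes \eqref{eq:minkh0}, but now using the pointwise bound $H\le H_0$ together with $\escpr{X,N}<0$, gives the reversed form of \eqref{eq:minkh0}, namely $\vol(\ptl K)\le (n+1)\,H_0\vol(K)$. Combining the two estimates produces
\[
\cp(K)\le (n-1)\,H_0\vol(\ptl K)\le (n^2-1)\,H_0^2\vol(K),
\]
which is the asserted bound.

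For the equality discussion I would argue as follows. If equality holds in the final estimate, then both intermediate inequalities must be equalities; in particular $\cp(K)=(n-1)\,H_0\vol(\ptl K)$, and the equality characterization of Corollary~\ref{thm2} then forces $K$ to be a round ball of radius $H_0^{-1}$. Conversely, for such a ball one has $H\equiv H_0$, so the Minkowski inequality becomes an equality, while \eqref{capball} shows the capacity estimate is sharp; hence equality holds throughout.

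I do not expect a substantial obstacle, since all the analytic content is already contained in Corollary~\ref{thm2} and in the Minkowski identity \eqref{eq:mink}. The only point demanding care is the bookkeeping of signs: because $N$ is the \emph{inner} normal, $\escpr{X,N}$ is negative on $\ptl K$, and one must track this sign when passing from the pointwise inequality $H\le H_0$ to the integrated estimate, so that \eqref{eq:minkh0} is applied in the correct (reversed) direction.
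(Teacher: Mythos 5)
Your proof is correct and follows exactly the route the paper takes: it derives Corollary~\ref{cor:2} by chaining the capacity bound of Corollary~\ref{thm2} with the reversed Minkowski inequality $\vol(\ptl K)\le (n+1)\,H_0\vol(K)$ valid when $H\le H_0$, and settles the equality case via the equality characterization in Corollary~\ref{thm2}. Nothing is missing.
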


\vspace{1em}

Now we consider a general convex body $K\subset\rr^{n+1}$,
possibly with non-smooth boundary. Given $\la>0$, we shall say
that $K$ is $\la$-convex if, for every $p\in\ptl K$, there is a
closed ball $B$ of radius $\la^{-1}$ such that $K\subset B$ and
$p\in\ptl B$. Taking outer parallel bodies to both $K$ and $B$ it
is immediate to check that the outer parallel body $K_r$ is
$(\la^{-1}+r)^{-1}$-convex whenever $K$ is $\la$-convex. If $K$
has $C^2$ boundary then $\la$-convexity is equivalent to the lower
bound $\kappa_i\ge\la$ for the principal curvatures $\kappa_i$ of
$\ptl K$.

If $K\subset\rr^{n+1}$ is a convex body, we will denote by
$\xi:\rr^{n+1}\setminus\intt(K)\to\ptl K$ the metric projection, which is a
contractive Lipschitz map \cite[\S~1.2]{sch}. The gradient of the
distance function $r:\rr^{n+1}\setminus K\to\rr$ is given by
\begin{equation}
\label{eq:nablar}
\nabla r(q)=\frac{q-\xi(q)}{r(q)},
\end{equation}
and so it is a locally Lipschitz vector field in
$\rr^{n+1}\setminus K$. Hence $r$ is a $C^{1,1}_{loc}$ function on
$\rr^{n+1}\setminus K$, and the gradient of $r$ and $\xi$ are
differentiable in the same set. By Rademacher's Theorem, $r$ is
twice differenciable $\hhh^{n+1}$-almost everywhere in $\rr^{n+1}\setminus K$.
From \eqref{eq:nablar}, if $\xi$ is differentiable at
$q\in\rr^{n+1}\setminus K$ then it is also differentiable along
the points~in the minimizing geodesic connecting $q$ and $K$. So
$\nabla r$ is differentiable $\mathcal{H}^n$-almost everwhere in the boundary
$\ptl K_r$, for any $r>0$, which implies that $\ptl K_r$ is a
$C^{1,1}$-hypersurface. The second fundamental form and the
principal curvatures are defined $\mathcal{H}^n$-a.e in $\ptl
K_r$. Moreover, if $K_r$ is $\mu$-convex then the principal
curvatures $\kappa_i$ of $\ptl K_r$ satisfy $\kappa_i\ge\mu$.



Principal curvatures can be defined for sets of positive reach
\cite{MR0110078}, \cite[\S~1]{zahle}, a class in which convex sets
are included. Given $K$, we consider the set of points $p\in\ptl
K$ where $\xi$ is differentiable in $\xi^{-1}(\{p\})$, which is a
set of full $\mathcal{H}^n$ measure in $\ptl K$ by the discussion
above. We intersect this set with the one of regular points, those
for which there is a unique supporting hyperplane, which has also
full $\mathcal{H}^n$ measure \cite[Thm.~2.2.4]{sch}. Then, for $p$
in the intersection of these sets, we define
\begin{equation}
\label{eq:defk}
\kappa_i(p)=\lim_{t\downarrow 0}\frac{(\kappa_i)_t(p+t\,n_p)}{1-t\,(\kappa_i)_t(p+t\,n_p)},
\end{equation}
where $n_p$ is the outer unit normal to $\ptl K$ in $p$, and
$(\kappa_i)_t$, $i=1,\ldots,n$, are the principal curvatures of
$\ptl K_t$. If $K$ is $\la$-convex, then $K_t$ is
$(\la^{-1}+t)^{-1}$-convex, and it follows from \eqref{eq:defk}
that $\kappa_i\ge\la$.



\begin{thm}
\label{baja regularidad} Let $K$ be an $H_0$-convex body in
$\mathbb{R}^{n+1}$ for some constant $H_0>0$. Then
\begin{equation}
\label{eq:cplow} \cp(K)\ge(n-1)\,H_0\,\mathcal{H}^n(\ptl K).
\end{equation}
Equality holds in \eqref{eq:cplow} if and only if $K$ is a round
ball of radius $H_0^{-1}$.
\end{thm}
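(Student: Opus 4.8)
The plan is to run the comparison argument of Theorem~\ref{thm2-b} (and its Euclidean specialization, Corollary~\ref{cor:euclidean2}) directly on $K$, replacing the smooth computations by the almost-everywhere differential geometry of the parallel hypersurfaces $\ptl K_r$ that is available for a convex body. I set $r\colon\rr^{n+1}\setminus\intt(K)\to\rr$ to be the distance to $K$ and define the comparison function $v:=\Phi_{H_0}\circ r$, where $\Phi_{H_0}$ is the equilibrium potential of $\bar B_{1/H_0}$ from \eqref{eq:epball}. By \eqref{eq:nablar} the function $r$ is $C^{1,1}_{loc}$ on $\rr^{n+1}\setminus K$, hence so is $v$, with $0\le v\le 1$ and $v\equiv 1$ on $\ptl K$.

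The first key step is the distributional inequality $\Delta v\le 0$ on $\rr^{n+1}\setminus K$. At a point where $r$ is twice differentiable one has $\Delta v=\Phi_{H_0}''+\Phi_{H_0}'\,\Delta r$ with $\Delta r=\sum_{i=1}^n\kappa_i$, the $\kappa_i$ being the principal curvatures of $\ptl K_r$. Since $K$ is $H_0$-convex, its outer parallel body $K_r$ is $(H_0^{-1}+r)^{-1}$-convex, so $\kappa_i\ge H_0/(1+H_0r)$ and therefore $\Delta r\ge nH_0/(1+H_0r)$ $\hhh^{n+1}$-a.e.; combined with $\Phi_{H_0}'\le 0$ and \eqref{laplacEuclideo} this gives $\Delta v\le 0$ almost everywhere. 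Because $v$ is $C^{1,1}_{loc}$ its gradient is a locally Lipschitz field, so the distributional Laplacian coincides with the $L^\infty_{loc}$ pointwise Laplacian and $\Delta v\le 0$ holds in the sense of distributions. I then reproduce the exhaustion argument: for each $t>0$ I compare $v_t:=\Phi_t\circ r$ with the equilibrium potential $u_t$ of $(K,\intt(K_t))$, note that $\Delta(v_t-u_t)\le 0$ weakly with $v_t-u_t\ge 0$ on $\ptl K\cup\ptl K_t$, apply the weak maximum principle \cite[\S~3.1]{gt}, and let $t\to\infty$ (using $v_t\uparrow v$, $u_t\uparrow u$) to obtain $v\ge u$ on $\rr^{n+1}\setminus K$, where $u$ is the equilibrium potential of $K$.

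Inequality \eqref{eq:cplow} then follows from a boundary comparison of gradients carried out $\hhh^n$-a.e. By \cite{Da} and \cite{MR1395668} the potential $u$ satisfies $u=1$ $\hhh^n$-a.e. on $\ptl K$, its gradient admits a non-tangential limit off an $\hhh^n$-null set, and \eqref{eq:defcapint} holds. At such a regular boundary point $p$ the field $\nabla r$ has non-tangential limit $N_p$ (the outer unit normal), so $|\nabla v|=-\Phi_{H_0}'(0)=(n-1)H_0$, while $\nabla u$ is normal to $\ptl K$. Differentiating $v\ge u$ along the ray $p+tN_p$ (both functions equal $1$ at $p$) gives $\partial_{N_p}v\ge\partial_{N_p}u$, that is $|\nabla u|(p)\ge (n-1)H_0$, and integration together with \eqref{eq:defcapint} yields
\[
\cp(K)=\int_{\ptl K}|\nabla u|\,d\hhh^n\ge (n-1)\,H_0\,\hhh^n(\ptl K).
\]

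For equality, $|\nabla u|=(n-1)H_0=|\nabla v|$ $\hhh^n$-a.e. on $\ptl K$, so $\partial_{N_p}(v-u)=0$ a.e. The function $w:=v-u\ge 0$ is $C^{1,1}_{loc}$ and superharmonic; convexity of $K$ provides an interior ball for $\rr^{n+1}\setminus K$ at every boundary point via a supporting hyperplane, so Hopf's boundary point lemma rules out $w>0$ in the interior and the strong minimum principle forces $w\equiv 0$, i.e. $v\equiv u$. Then $\Delta v=0$ a.e. gives $\Delta r=nH_0/(1+H_0r)$, and as each $\kappa_i\ge H_0/(1+H_0r)$ this forces $\kappa_i=H_0/(1+H_0r)$ for all $i$ a.e.: every $\ptl K_r$ is totally umbilical with constant principal curvature $c=(H_0^{-1}+r)^{-1}$. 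Since the Gauss map $\nu=\nabla r$ is Lipschitz on the $C^{1,1}$ hypersurface $\ptl K_r$, the map $x\mapsto x+c^{-1}\nu(x)$ has vanishing tangential derivative a.e., hence is constant, so $\ptl K_r$ is a round sphere of radius $H_0^{-1}+r$; passing to the inner parallel body shows $K$ is a ball of radius $H_0^{-1}$, for which equality is \eqref{capball}. The main difficulty throughout is precisely the passage from the pointwise, almost-everywhere curvature information to the distributional sign of $\Delta v$ and to the a.e. comparison of boundary normal derivatives, which is where the $C^{1,1}$ regularity of the distance function, the positive-reach structure behind \eqref{eq:defk}, and the fine potential-theoretic boundary results of \cite{Da} and \cite{MR1395668} must be invoked with care.
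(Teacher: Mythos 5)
Your proof is correct and follows essentially the same route as the paper: the same comparison function $v=\Phi_{H_0}\circ r$, the same a.e.\ curvature bound from the $(H_0^{-1}+r)^{-1}$-convexity of $K_r$, the same exhaustion plus weak maximum principle to get $v\ge u$, and the same reliance on \cite{Da} and \cite{MR1395668} for the boundary behaviour of $u$ and the validity of \eqref{eq:defcapint}. The only deviations are cosmetic --- you derive the boundary gradient inequality by differentiating $v\ge u$ along the normal ray where the paper invokes Hopf's lemma on an outer ball, and you spell out the umbilicity-implies-sphere step via the map $x\mapsto x+c^{-1}\nu(x)$ --- both of which are fine.
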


\begin{proof}
We consider the function $v(p)=\Phi_{H_0}(r(p))$, where
$r:\rr^{n+1}\setminus\intt(K)\to\rr$ is the distance function to $K$ and $\Phi_{H_0}$
is the equilibrium potential of the Euclidean ball of radius $H_0^{-1}$ as
defined in \eqref{eq:epball}. The function $v$ is $C^{1,1}$ in
$\rr^{n+1}\setminus K$. Let $p\in\rr^{n+1}\setminus K$ be a point
such that the distance function $r$ is twice differentiable along
the straight line minimizing the distance from $p$ to $\ptl K$.
Along this line we have
\begin{equation}
\label{eq:laplav3}
\Delta v(p)=\Phi''(r(p))+\Phi'(r(p))\,nH_r(p),
\end{equation}
where $H_r$ is the mean curvature of the parallel hypersurface
$\ptl K_r$. Since $K_r$ is $(1/H_0 + r)^{-1}$-convex, we have at
the regular points
\begin{equation}
\label{eq:hr}
H_r\ge \frac{H_0}{1+H_0r}.
\end{equation}

Let $u$ be the equilibrium potential of $K$. Since $\Phi' \leq 0$,
we conclude from \eqref{eq:laplav3}, \eqref{eq:hr} and
\eqref{laplacEuclideo}, that
\begin{equation}
\label{eq:estimatelaplacian3}
\Delta v(p)\leq
\Phi''(r(p))+\Phi'(r(p))\,\frac{nH_0}{1+H_0r(p)}=0=\Delta u(p),
\end{equation}
at every point $p$ where $r$ is $C^2$.

Let us check that $v\ge u$ in $\rr^{n+1}\setminus K$. For every
$t>0$, let $u_t$ be the equilibrium potential of $(K,K_t)$,
$\Phi_t$ the equilibrium potential in Euclidean space of
$(\bar{B}_{1/H_0}, B_{(1/H_0)+t})$, and $v_t:=\Phi_t\circ r$. Equation
\eqref{eq:estimatelaplacian3} implies that $\Delta (v_t-u_t)\le 0$
in $K_t\setminus K$ in the sense of distributions. By the weak
maximum principle, $v_t\ge u_t$ in $K_t\setminus K$. Since $v_t$,
$u_t$ are increasing families converging to $v$ and $u$
respectively we conclude that $v\ge u$ in $\rr^{n+1}\setminus K$.

Let $\Gamma(p)=\{q\in \rr^{n+1}\setminus K :\,|p-q|<Cr(q)\}$ be
a non-tangential cone at $p\in \ptl K$. By \cite[Thm.~1.7]{MR1395668},
$\nabla u$ has a non-tangential limit at $\mathcal{H}^n$-almost every
point of the boundary of $K$, i.e., $\lim_{q\rightarrow p}
\nabla u(q)$, $q\in \Gamma(p)$, exists for $\mathcal{H}^n$-almost
every $p\in \ptl K$.

We pick $p\in\ptl K$ so that $\nabla u$ extends to $p$, $\nabla v$
is defined in $p$ and $u$ extends continuously to $p$, and
restrict them to an outer ball $B$ to $K$. Applying Hopf's
maximum principle (see \cite{gt} and \cite{dl}) to $B$ we get,
since $\Delta(v-u)\le 0$ in $B$, $v\ge u$ in $B$, $v-u \in
\mathcal{C}^0(\bar{B})$ and $u(p)=v(p)=1$ that
\[
|\nabla v|(p)\le |\nabla u|(p).
\]
Since formula \eqref{eq:defcapint} is valid for arbitrary convex
sets \cite{MR1395668}, we get
\begin{align}
\label{eq:capestimateeucl} \cp(K)=\int_{\ptl K} |\nabla
u|\,dA&\ge\int_{\ptl K}|\nabla v|\,dA
\\
\notag &= -\Phi'(0)\,\mathcal{H}^n(\ptl
K)=(n-1)\,H_0\,\mathcal{H}^n(\ptl K),
\end{align}
which yields the desired inequality. Assume equality holds in \eqref{eq:capestimateeucl}. Then $|\nabla
v|=|\nabla u|$ $\mathcal{H}^n$-almost everywhere on $\ptl K$. So if
$p\in\ptl K$ is such that $|\nabla v|(p)=|\nabla u|(p)$,
applying the maximum principle on an outer ball $B$ to $K$ we have
that $u\equiv v$ on $B$ and then  $u\equiv v$ on
$\rr^{n+1}\setminus K$.

So the level sets of $u$ and $v$, and their gradients, coincide in $\rr^{n+1}\setminus K$. The level sets of $v$ are the parallel hypersurfaces $\ptl K_r$, which are of class $C^{1,1}$, and the gradient of $v$ is equal to $-\Phi'(r)\,\nabla r$, which never vanishes in $\rr^{n+1}\setminus K$. Hence the level sets of $u$ are $C^\infty$ hypersurfaces since $u\in C^\infty(\rr^{n+1}\setminus K)$ and $\nabla u=\nabla v\neq 0$.

Since equality holds in \eqref{eq:capestimateeucl}, it is also attained in \eqref{eq:hr} and \eqref{eq:estimatelaplacian3}. In particular, the principal curvatures of $\ptl K_r$ are all equal to $(H_0^{-1}+r)^{-1}$ , what implies that $\{K_r\}_{r>0}$ are concentric balls of radius $H_0^{-1}+r$. Hence $\ptl K$ is a sphere of radius $H_0^{-1}$.
\end{proof}

\def\cprime{$'$} \def\cprime{$'$}
\providecommand{\bysame}{\leavevmode\hbox to3em{\hrulefill}\thinspace}
\providecommand{\MR}{\relax\ifhmode\unskip\space\fi MR }
\providecommand{\MRhref}[2]{%
  \href{http://www.ams.org/mathscinet-getitem?mr=#1}{#2}
}
\providecommand{\href}[2]{#2}


\end{document}